\newtheorem{theorem}{Theorem}[section]
\newtheorem{corollary}[theorem] {Corollary}
\newtheorem{definition}[theorem]{Definition}
\newtheorem{example}[theorem]{Example}
\newtheorem{problem}[theorem]{Problem}
\newtheorem{remark}[theorem]{Remark}
\title{This is the title}
\begin{document}
	\hrule\hrule\hrule\hrule\hrule
	\vspace{0.3cm}	
	\begin{center}
		{\bf{p-ADIC MAGIC CONTRACTIONS, p-ADIC VON NEUMANN INEQUALITY  AND p-ADIC SZ.-NAGY DILATION}}\\
		\vspace{0.3cm}
		\hrule\hrule\hrule\hrule\hrule
		\vspace{0.3cm}
		\textbf{K. MAHESH KRISHNA}\\
		Post Doctoral Fellow \\
		Statistics and Mathematics Unit\\
		Indian Statistical Institute, Bangalore Centre\\
		Karnataka 560 059, India\\
		Email: kmaheshak@gmail.com\\
		
		Date: \today
	\end{center}

\hrule
\vspace{0.5cm}
\textbf{Abstract}: We introduce the notion of p-adic magic contraction on p-adic Hilbert space. We derive p-adic Halmos dilation, p-adic Egervary N-dilation, p-adic von Neumann inequality and p-adic Sz.-Nagy dilation for p-adic magic contraction.

\textbf{Keywords}:  Dilation, von Neumann inequality, Non-Archimedean valued field, p-adic Hilbert space, Contraction, Unitary operator.

\textbf{Mathematics Subject Classification (2020)}: 47A20, 11E95, 46S10, 47S10, 12J25.
\vspace{0.5cm}
\hrule 

\section{Introduction}
Dilation theory of contractions on Hilbert spaces which started from the work of Halmos \cite{HALMOS}  and forwarded by Sz.-Nagy \cite{NAGY}  is now 70 years old. Influential works on this area are documented in references \cite{SCHAFFER, NAGYLIFTING, ANDO, PAULSENBOOK, PISIERBOOK, SARASON, AMBROZIEAMULLER, AGLERMCCARTHY, DOUGLAS, NAGYFOIAS, LEVYSHALIT, ARVESON, ORRGUIDED, FRAZHO, FOIASFRAZHO, FOIASFRAZHOGOHBERGKAASHOEK, DURSZTNAGY, POPESCU, BERCOVICI, BHATMUKHERJEE, EGERVARY, BHATTACHARYYA, NAGY1960, PARROTT, DRURY, CRABBDAVIE, MCCARTHYSHALIT, VAROPOULOS, CHOIDAVIDSON} Gradually,  the theory has been put in the Banach space settings \cite{FACKLERGLUCK, AKCOGLUSUCHESTON, STROESCU, NAGELPALM, AKCOGLUKOPP, KERNNAGELPALM, FENDLER}.

Very recently, the dilation theory has been introduced for functions on sets \cite{BHATDERAKSHITH} and linear operators on vector spaces  \cite{HANLARSONLIULIU, BHATDERAKSHITH, KRISHNAJOHNSON}. In this paper,  we introduce the notion of magic contraction (Definition \ref{PS}). We then derive p-adic versions of Halmos dilation (Theorem \ref{HD}), Egervary N-dilation (Theorem \ref{ED}), von Neumann inequality (Theorem \ref{VE}), Sz.-Nagy dilation (Theorem \ref{ND}) and von Neumann ergodic result (Theorem \ref{PADICERGODIC}). Our paper is highly motivated from the paper of Halmos \cite{HALMOS}, Egervary \cite{EGERVARY}, Schaffer \cite{SCHAFFER}, Sz.-Nagy \cite{NAGY}, Bhat, De and Rakshit \cite{BHATDERAKSHITH} and Krishna and Johnson \cite{KRISHNAJOHNSON}.

\section{p-adic Magic Contractions, p-adic von Neumann Inequality and p-adic Sz.-Nagy Dilation}
We use the following notion of  p-adic Hilbert space which is slight variant of notion  introduced by Kalisch \cite{KALISCH}. 
\begin{definition}\cite{KALISCH}\label{PADICDEF}
Let $\mathbb{K}$ be a non-Archimedean (complete) valued field (with valuation $|\cdot|$) and $\mathcal{X}$ be a non-Archimedean Banach space (with norm $\|\cdot\|$) over $\mathbb{K}$. We say that $\mathcal{X}$ is a p-adic Hilbert space if there is a map (called as inner product) $\langle \cdot, \cdot \rangle: \mathcal{X} \times \mathcal{X} \to \mathbb{K}$ satisfying following.
\begin{enumerate}[\upshape (i)]
	\item If $x \in \mathcal{X}$ is such that $\langle x,y \rangle =0$ for all $y \in \mathcal{X}$, then $x=0$.
	\item $\langle x, y \rangle =\langle y, x \rangle$ for all $x,y \in \mathcal{X}$.
	\item $\langle \alpha x+y, z \rangle =\alpha \langle x,  z \rangle +\langle y,z \rangle $ for all  $\alpha  \in \mathbb{K}$, for all $x, y, z \in \mathcal{X}$.
	\item $|\langle x, y \rangle |\leq \|x\|\|y\|$ for all $x,y \in \mathcal{X}$.
\end{enumerate}
\end{definition}
Following are  standard examples we keep in mind.
\begin{example}
Let $d\in \mathbb{N}$ and 	$\mathbb{K}$ be a non-Archimedean (complete) valued field. Define
\begin{align*}
	\mathbb{K}^d\coloneqq \{(x_j)_{j=1}^d:x_j \in \mathbb{K}, 1\leq j \leq d\}
\end{align*} 
Then $\mathbb{K}^d$ is a p-adic Hilbert space w.r.t. norm 
\begin{align*}
	\|(x_j)_{j=1}^d\|\coloneqq \max_{1\leq j \leq d}|x_j|, \quad \forall (x_j)_{j=1}^d\in 	\mathbb{K}^d
\end{align*}
and inner product 
\begin{align*}
	\langle (x_j)_{j=1}^d, (y_j)_{j=1}^d\rangle \coloneqq \sum_{j=1}^dx_jy_j, \quad \forall (x_j)_{j=1}^d, (y_j)_{j=1}^d\in 	\mathbb{K}^d.
\end{align*}
\end{example}
\begin{example}
Let $\mathbb{K}$ be a non-Archimedean (complete) valued field. Define
\begin{align*}
	c_0(\mathbb{N}, \mathbb{K})\coloneqq \{(x_n)_{n=1}^\infty:x_n \in \mathbb{K}, \forall n \in \mathbb{N}, \lim_{n\to \infty}|x_n|=0\}
\end{align*} 
Then $c_0(\mathbb{N}, \mathbb{K})$ is a p-adic Hilbert space w.r.t. norm 
\begin{align*}
	\|(x_n)_{n=1}^\infty\|\coloneqq \sup_{n\in \mathbb{N}}|x_n|, \quad \forall (x_n)_{n=1}^\infty\in 	c_0(\mathbb{N}, \mathbb{K})
\end{align*}
and inner product 
\begin{align*}
	\langle (x_n)_{n=1}^\infty, (y_n)_{n=1}^\infty\rangle \coloneqq \sum_{n=1}^\infty x_ny_n, \quad \forall (x_n)_{n=1}^\infty, (y_n)_{n=1}^\infty\in 	c_0(\mathbb{N}, \mathbb{K}).
\end{align*}	
\end{example}
Let $\mathcal{X}$ be a p-adic Hilbert space and $T:\mathcal{X}\to \mathcal{X}$ be a bounded linear operator. We say that $T$ is adjointable if there is a bounded linear operator, denoted by $T^*:\mathcal{X}\to \mathcal{X}$ such that $\langle Tx,y\rangle =\langle x,T^*y\rangle$, $\forall x, y \in \mathcal{X}$. Note that (i) in Definition \ref{PADICDEF} says that adjoint, if exists,  is unique. An adjointable bounded linear operator $U$ is said to be a unitary if $UU^*=U^*U=I_\mathcal{X}$, the identity operator on $\mathcal{X}$.  An adjointable bounded linear operator $P$ is said to be projection if $P^2=P^*=P$. An adjointable bounded linear operator $T$ is said to be an isometry if $T^*T=I_\mathcal{X}$. An adjointable bounded linear operator $T$ is said to be  self-adjoint  if $T^*=T$. We denote the identity operator on $\mathcal{X}$ by $I_\mathcal{X}$. Following is the magic definition.
\begin{definition}\label{PS}
Let $\mathcal{X}$ be a p-adic Hilbert space and $T:\mathcal{X}\to \mathcal{X}$ be a bounded linear adjointable operator. We say that 	$T$ is a \textbf{magic contraction} if there are self adjoint bounded linear operators $M_{T}:\mathcal{X}\to \mathcal{X}$ and $ M_{T^*}: \mathcal{X}\to \mathcal{X}$ (which may not be unique, that is why $M$) such that 
\begin{align*}
	&M_T^2=I_\mathcal{X}-T^*T, \quad M_{T^*}^2=I_\mathcal{X}-TT^*,\\
	&TM_T= M_{T^*}T.
\end{align*}
\end{definition}
Our first result is the p-adic Halmos dilation. 
\begin{theorem} (\textbf{p-adic Halmos dilation})\label{HD}
    	Let $\mathcal{X}$ be a p-adic Hilbert  space  and 	$T: \mathcal{X} \to \mathcal{X}$ be a magic contraction. Then the operator 
    	\begin{align*}
    		U\coloneqq \begin{pmatrix}
    			T & M_{T^*}   \\
    			M_T & -T^*  \\
    		\end{pmatrix}
    	\end{align*}
    	is unitary  on 	$\mathcal{X}\oplus \mathcal{X}$. In other words,
    	\begin{align*}
    		T=P_\mathcal{X}U|_\mathcal{X}, \quad 	T^*=P_\mathcal{X}U^*|_\mathcal{X},
    	\end{align*}
    	where $P_\mathcal{X}:\mathcal{X}\oplus \mathcal{X}\ni (x, y) \mapsto x \in \mathcal{X}$.
    \end{theorem}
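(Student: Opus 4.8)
The plan is to verify directly that $UU^* = U^*U = I_{\mathcal{X}\oplus\mathcal{X}}$, where $\mathcal{X}\oplus\mathcal{X}$ carries the natural inner product $\langle(x_1,x_2),(y_1,y_2)\rangle = \langle x_1,y_1\rangle + \langle x_2,y_2\rangle$. First I would record the adjoint of $U$. Since $M_T$ and $M_{T^*}$ are self-adjoint and $T$ is adjointable, the block operator $U$ is adjointable, and a short computation pairing $U(x_1,x_2)$ against $(y_1,y_2)$ in the inner product on $\mathcal{X}\oplus\mathcal{X}$ shows that
\[
U^* = \begin{pmatrix} T^* & M_T \\ M_{T^*} & -T \end{pmatrix}.
\]

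The crucial preliminary observation is that the defining relation $TM_T = M_{T^*}T$ yields, upon taking adjoints and invoking the self-adjointness of $M_T$ and $M_{T^*}$, the companion relation $M_T T^* = T^* M_{T^*}$. These two identities are exactly what is needed to annihilate the off-diagonal blocks in the products below, and neither of them alone suffices.

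Then I would compute $U^*U$ block by block. The $(1,1)$ entry is $T^*T + M_T^2 = T^*T + (I_\mathcal{X} - T^*T) = I_\mathcal{X}$ and the $(2,2)$ entry is $M_{T^*}^2 + TT^* = (I_\mathcal{X} - TT^*) + TT^* = I_\mathcal{X}$, each collapsing through the defining squares $M_T^2 = I_\mathcal{X}-T^*T$ and $M_{T^*}^2 = I_\mathcal{X}-TT^*$. The $(2,1)$ entry is $M_{T^*}T - TM_T$, which vanishes by the defining relation, while the $(1,2)$ entry is $T^*M_{T^*} - M_T T^*$, which vanishes by the adjoint relation. Hence $U^*U = I_{\mathcal{X}\oplus\mathcal{X}}$. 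The computation of $UU^*$ is entirely symmetric: the diagonal blocks again reduce via $M_{T^*}^2 = I_\mathcal{X}-TT^*$ and $M_T^2 = I_\mathcal{X}-T^*T$, and the off-diagonal blocks $TM_T - M_{T^*}T$ and $M_T T^* - T^*M_{T^*}$ vanish by the same two relations.

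Having established that $U$ is unitary, the compression formulas are immediate. Embedding $\mathcal{X}$ as the first summand via $x \mapsto (x,0)$, I compute $U(x,0) = (Tx, M_T x)$, so that $P_\mathcal{X}U|_\mathcal{X}\,x = Tx$; likewise $U^*(x,0) = (T^*x, M_{T^*}x)$ gives $P_\mathcal{X}U^*|_\mathcal{X}\,x = T^*x$. I do not anticipate a genuine obstacle here, as everything reduces to the four defining identities; the only point demanding care is the bookkeeping that produces the adjoint relation $M_T T^* = T^*M_{T^*}$, without which the off-diagonal blocks would fail to cancel in this non-commutative, non-Archimedean setting.
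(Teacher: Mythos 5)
Your proposal is correct and follows essentially the same route as the paper: the paper's proof simply exhibits $V=\begin{pmatrix} T^* & M_T \\ M_{T^*} & -T \end{pmatrix}$ and asserts by direct calculation that it is both the adjoint and the inverse of $U$, which is exactly the block computation you carry out. Your write-up merely fills in the details the paper leaves implicit, in particular the useful observation that taking adjoints of $TM_T = M_{T^*}T$ yields $M_T T^* = T^* M_{T^*}$, which is what kills the off-diagonal blocks.
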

\begin{proof}
A direct calculation says that 	
\begin{align*}
	V\coloneqq \begin{pmatrix}
		T^* & M_T   \\
		M_{T^*} & -T  \\
	\end{pmatrix}
\end{align*}
is the inverse and adjoint of $U$.	
\end{proof}
As an application of dilation, Sz.-Nagy gave an easy proof of fixed point of a contraction is also a fixed point of its adjoint \cite{NAGY}. Here we give a similar result for p-adic magic contractions. 
\begin{corollary}
	Let $\mathcal{X}$ be a p-adic Hilbert  space  and 	$T: \mathcal{X} \to \mathcal{X}$ be a magic contraction. If 	$x \in \mathcal{X}$ is such that $Tx=x$, then $T^*x=x$.
\end{corollary}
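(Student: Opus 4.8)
The plan is to imitate Sz.-Nagy's classical argument, but with the defining relations of a magic contraction doing the work that positivity does in the Archimedean case. The first step, which is the natural analogue of the Hilbert space computation $\|M_Tx\|^2=\langle x,(I-T^*T)x\rangle$, is to record that since $M_T$ is self-adjoint, $M_T^2=I_\mathcal{X}-T^*T$, and $Tx=x$, we have
\[
\langle M_Tx,M_Tx\rangle=\langle x,M_T^2x\rangle=\langle x,x\rangle-\langle x,T^*Tx\rangle=\langle x,x\rangle-\langle Tx,Tx\rangle=\langle x,x\rangle-\langle x,x\rangle=0 .
\]
So $M_Tx$ is a self-orthogonal vector. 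Equivalently, feeding this through Theorem \ref{HD}, the unitary $U$ sends $(x,0)$ to $U(x,0)=(Tx,M_Tx)=(x,M_Tx)$, and the same calculation shows $\langle U(x,0)-(x,0),U(x,0)-(x,0)\rangle=\langle M_Tx,M_Tx\rangle=0$.

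Granting that the self-orthogonality above forces $M_Tx=0$, the corollary closes in one line, since $Tx=x$ gives $T^*Tx=T^*x$ and therefore $x-T^*x=(I_\mathcal{X}-T^*T)x=M_T^2x=M_T(M_Tx)=0$, i.e. $T^*x=x$. One may also finish more structurally through the dilation: once $M_Tx=0$, the vector $(x,0)$ is a genuine fixed vector of $U$, so applying $U^*=U^{-1}$ (the operator $V$ of Theorem \ref{HD}) gives $(x,0)=U^*(x,0)=(T^*x,M_{T^*}x)$, and reading off the first coordinate again yields $T^*x=x$. Either route reduces everything to the single implication $\langle M_Tx,M_Tx\rangle=0\Rightarrow M_Tx=0$.

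The hard part, and the genuinely p-adic subtlety, is exactly that implication. In a classical Hilbert space it is automatic because $\langle z,z\rangle=\|z\|^2$ is positive-definite, but a p-adic Hilbert space is only assumed non-degenerate in the weak sense of Definition \ref{PADICDEF}(i): nonzero self-orthogonal vectors really occur (for suitable $\mathbb{K}$ the vector $(1,\iota)\in\mathbb{K}^2$ with $\iota^2=-1$ satisfies $\langle(1,\iota),(1,\iota)\rangle=0$). Hence the crux of the proof is to upgrade self-orthogonality of $M_Tx$ to its vanishing, which by (i) amounts to proving $\langle M_Tx,y\rangle=\langle x,M_Ty\rangle=0$ for \emph{every} $y\in\mathcal{X}$ rather than merely for $y=x$. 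The only tools available for this are the intertwining identities $TM_T=M_{T^*}T$ and its adjoint $M_TT^*=T^*M_{T^*}$ together with $Tx=x$; I expect that either these must be combined cleverly to collapse $\langle x,M_Ty\rangle$, or else one invokes whatever definiteness the ambient space carries, and this is where the essential difficulty of the argument lies.
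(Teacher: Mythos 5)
Your reduction is correct as far as it goes, and your diagnosis locates the crux exactly. The computation $\langle M_Tx,M_Tx\rangle=\langle x,x\rangle-\langle Tx,Tx\rangle=0$ is valid, and both of your conditional endgames (via $x-T^*x=M_T^2x$, or via applying $U^*$ to the fixed vector $(x,0)$) are sound \emph{given} $M_Tx=0$. You should know that the paper's own proof does not close this gap either: it runs $x=Tx=P_\mathcal{X}Ux$ and then asserts ``since $P_\mathcal{X}$ is an orthogonal projection, we must have $Ux=x$.'' That sentence is exactly the classical Pythagorean argument, which needs the norm to come from a positive-definite form; under Definition \ref{PADICDEF} the form is only non-degenerate in the weak sense of (i), the norm is not induced by the form, and a unitary in the algebraic sense of this paper need not even be an isometry. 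Since $U(x,0)=(Tx,M_Tx)=(x,M_Tx)$ by Theorem \ref{HD}, the paper's unjustified step is literally the implication $\langle M_Tx,M_Tx\rangle=0\Rightarrow M_Tx=0$ that you refused to assume.

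More importantly, that implication genuinely fails, and with it the corollary itself, so no cleverness with the intertwining relations can finish your proof. Take $\mathbb{K}=\mathbb{Q}_5$, which contains $i$ with $i^2=-1$ (since $5\equiv 1 \ (\operatorname{mod}\ 4)$), let $\mathcal{X}=\mathbb{K}^2$ with the standard form $\langle u,v\rangle=u_1v_1+u_2v_2$, so that $T^*$ is the transpose, and set
\begin{align*}
T\coloneqq\begin{pmatrix}1 & i\\ 0 & 1\end{pmatrix},\qquad M_T\coloneqq\begin{pmatrix}i & -1\\ -1 & 0\end{pmatrix},\qquad M_{T^*}\coloneqq\begin{pmatrix}0 & -1\\ -1 & i\end{pmatrix}.
\end{align*}
Both $M_T$ and $M_{T^*}$ are self-adjoint, and a direct check gives
\begin{align*}
M_T^2=\begin{pmatrix}0 & -i\\ -i & 1\end{pmatrix}=I-T^*T,\qquad M_{T^*}^2=\begin{pmatrix}1 & -i\\ -i & 0\end{pmatrix}=I-TT^*,\qquad TM_T=\begin{pmatrix}0 & -1\\ -1 & 0\end{pmatrix}=M_{T^*}T,
\end{align*}
so $T$ is a magic contraction in the sense of Definition \ref{PS}. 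Yet for $x=e_1=(1,0)$ we have $Tx=x$ while $T^*x=(1,i)\neq x$; here $M_Tx=(i,-1)$ is precisely a nonzero self-orthogonal vector, $\langle M_Tx,M_Tx\rangle=i^2+1=0$. So the statement is false at this level of generality: your proof (either route) becomes complete, and repairs the paper's argument, only under an added anisotropy hypothesis such as $\langle z,z\rangle=0\Rightarrow z=0$ for all $z\in\mathcal{X}$, and some such hypothesis is unavoidable.
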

\begin{proof}
	Let $U$ be a Halmos dilation of $T$. Then $	x=Tx=P_\mathcal{X}Ux$. Since $P_\mathcal{X}$ is an orthogonal projection, we must have $Ux=x$. Since $U$ is unitary, we then have $U^*x=x$. Therefore $T^*x=P_\mathcal{X} U^*x=P_\mathcal{X}x=x$.
\end{proof}
Our second result is the p-adic Egervary dilation.
\begin{theorem} \label{ED}(\textbf{p-adic Egervary N-dilation})
Let $\mathcal{X}$ be a p-adic Hilbert space and $T:\mathcal{X}\to \mathcal{X}$ be a magic contraction. Let $N$ be a natural number. Then the operator 
\begin{align*}
U\coloneqq \begin{pmatrix}
T & 0& 0 & \cdots &0&0 & M_{T^*}   \\
M_T & 0& 0 & \cdots &0&0& -T^*   \\
0&I_\mathcal{X}&0&\cdots &0&0& 0\\
0&0&I_\mathcal{X}&\cdots &0&0 & 0\\
\vdots &\vdots &\vdots & &\vdots & \vdots &\vdots \\
0&0&0&\cdots &0&0 & 0\\
0&0&0&\cdots &I_\mathcal{X}&0 & 0\\
0&0&0&\cdots &0&I_\mathcal{X} & 0\\
\end{pmatrix}_{(N+1)\times (N+1)}
\end{align*}
is unitary on 	$\oplus_{k=1}^{N+1} \mathcal{X}$ and 
\begin{align}\label{FINITEDILATIONEQUATION}
T^k=P_\mathcal{X}U^k|_\mathcal{X},\quad \forall k=1, \dots, N, \quad (T^*)^k=P_\mathcal{X}(U^*)^k|_\mathcal{X},\quad \forall k=1, \dots, N,
\end{align}
where $P_\mathcal{X}:\oplus_{k=1}^{N+1} \mathcal{X} \ni (x_k)_{k=1}^{N+1} \mapsto x_1 \in \mathcal{X}$. 		
\end{theorem}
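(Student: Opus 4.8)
The plan is to reduce everything to the two ``intertwining'' identities supplied by the magic relations, namely $TM_T = M_{T^*}T$ together with its adjoint $M_T T^* = T^* M_{T^*}$ (obtained by taking adjoints of the former and using that $M_T$ and $M_{T^*}$ are self-adjoint), and then to exploit the shift structure of the lower block of $U$. First I would record the action of $U$ and of its formal adjoint $U^*$ as maps on $\oplus_{k=1}^{N+1}\mathcal{X}$. Reading off the matrix,
\begin{align*}
U(x_1, x_2, \dots, x_{N+1}) = (Tx_1 + M_{T^*}x_{N+1}, \; M_T x_1 - T^* x_{N+1}, \; x_2, \; x_3, \; \dots, \; x_N),
\end{align*}
and, using self-adjointness of $M_T$ and $M_{T^*}$,
\begin{align*}
U^*(x_1, x_2, \dots, x_{N+1}) = (T^* x_1 + M_T x_2, \; x_3, \; x_4, \; \dots, \; x_{N+1}, \; M_{T^*} x_1 - T x_2).
\end{align*}

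To prove that $U$ is unitary I would verify $U^*U = UU^* = I$ directly by composing these two coordinate formulas. The only nontrivial coordinates are the first and the last; the first coordinate of $U^*U(x_1,\dots,x_{N+1})$ collapses to $(T^*T + M_T^2)x_1 + (T^*M_{T^*} - M_T T^*)x_{N+1} = x_1$ using $M_T^2 = I_\mathcal{X} - T^*T$ and $M_T T^* = T^* M_{T^*}$, while the last coordinate collapses to $(M_{T^*}T - TM_T)x_1 + (M_{T^*}^2 + TT^*)x_{N+1} = x_{N+1}$ using $TM_T = M_{T^*}T$ and $M_{T^*}^2 = I_\mathcal{X} - TT^*$; every middle coordinate matches by the shift. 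The computation of $UU^*$ is symmetric. This is essentially the Halmos computation of Theorem \ref{HD} padded by identity shifts, so no new difficulty arises at this stage.

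The substantive point is the dilation identity (\ref{FINITEDILATIONEQUATION}), and here the plan is an induction on $k$ tracking the orbit of $(x,0,\dots,0)$. I would prove that for $0 \le k \le N$,
\begin{align*}
U^k(x, 0, \dots, 0) = (T^k x, \; M_T T^{k-1}x, \; M_T T^{k-2}x, \; \dots, \; M_T x, \; 0, \dots, 0),
\end{align*}
where the tail entries occupy exactly the coordinates $2, \dots, k+1$ and all later coordinates vanish. The base case is immediate, and the inductive step is a single application of $U$: the shift moves the tail down by one slot, and the first two coordinates $Tx_1 + M_{T^*}x_{N+1}$ and $M_Tx_1 - T^*x_{N+1}$ simplify because the last coordinate $x_{N+1}$ is still zero at every stage $k-1 \le N-1$. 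Projecting onto the first coordinate then gives $P_\mathcal{X}U^k|_\mathcal{X} = T^k$ for $k = 1, \dots, N$. The formula for $U^*$ has the same structure but fills the coordinates from the last slot upward, with the feedback now sitting in the second coordinate, so the mirror induction yields $P_\mathcal{X}(U^*)^k|_\mathcal{X} = (T^*)^k$ for $k = 1, \dots, N$.

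The main obstacle to watch is exactly the role of the size $N+1$: the feedback block $M_{T^*}, -T^*$ occupies the last column, so a nonzero entry can return to the first coordinate only once the shift has pushed the tail all the way into coordinate $N+1$. Because the $(N+1)$-st coordinate of $U^k(x,0,\dots,0)$ remains zero for every $k \le N-1$, the feedback stays inert and the first coordinate equals $T^k x$ for all $k \le N$; at $k = N+1$ the tail reaches the feedback slot and the dilation property would break, which is precisely why the construction is faithful up to the $N$-th power and no further. I would make this quantitative inside the induction by keeping explicit track of which coordinates are nonzero at each step, so that the vanishing of the $(N+1)$-st coordinate for $k \le N-1$ is visibly what forces the first coordinate to equal $T^k x$.
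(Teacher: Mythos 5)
Your proposal is correct and follows essentially the same route as the paper: your coordinate formula for $U^*$ is exactly the operator $V$ that the paper exhibits as the inverse/adjoint, your verification of $U^*U=UU^*=I$ via $M_T^2=I_\mathcal{X}-T^*T$, $M_{T^*}^2=I_\mathcal{X}-TT^*$, $TM_T=M_{T^*}T$ and its adjoint $M_TT^*=T^*M_{T^*}$ is the computation the paper leaves implicit, and your induction on the orbit of $(x,0,\dots,0)$ is a careful write-up of what the paper calls ``a direct calculation of power of $U$.'' The only difference is that you supply the details (in particular the explicit bookkeeping showing the $(N+1)$-st coordinate stays zero for $k\leq N-1$) that the paper merely asserts.
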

\begin{proof}
A direct calculation of power of $U$ gives Equation (\ref{FINITEDILATIONEQUATION}). To complete the proof, now we need show that $U$ is unitary. Define 
\begin{align*}
	V\coloneqq \begin{pmatrix}
		T^* & M_T& 0 & \cdots &0&0 & 0   \\
		0 & 0& I_\mathcal{X} & \cdots &0&0& 0   \\
		0&0&0&\cdots &0&0& 0\\
		0&0&0&\cdots &0&0 & 0\\
		\vdots &\vdots &\vdots & &\vdots & \vdots &\vdots \\
		0&0&0&\cdots &0&I_\mathcal{X} & 0\\
		0&0&0&\cdots &0&0 & I_\mathcal{X}\\
		M_{T^*}&-T&0&\cdots &0&0 & 0\\
	\end{pmatrix}_{(N+1)\times (N+1)}.
\end{align*}
Then $UV=VU=I_{\oplus_{k=1}^{N+1} \mathcal{X}}$ and  $U^*=V$.
\end{proof}
  Note that the Equation (\ref{FINITEDILATIONEQUATION}) holds only upto $N$ and not for $N+1$ and higher natural numbers. Inspired from the arguments of Sz.-Nagy \cite{NAGY} for the proof of classical von Neumann inequality, we derive following p-adic von Neumann inequality.
  \begin{theorem}\label{VE}
  	(\textbf{p-adic von Neumann inequality}) Let $f(z)\coloneqq a_0+a_1z+\cdots +a_Nz^N \in \mathbb{K}[z]$ be a polynomial of degree $N$ over $\mathbb{K}$. Let $T:\mathcal{X}\to \mathcal{X}$ be a magic contraction. Let $U:\oplus_{k=1}^{N+1} \mathcal{X} \to \oplus_{k=1}^{N+1} \mathcal{X}$ be any Egervary N-dilation of $T$. Then 
  	\begin{align*}
  		\|f(T)\|\leq \|f(U)\|.
  	\end{align*}
  \end{theorem}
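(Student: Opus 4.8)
The plan is to reduce the inequality to the single compression identity $f(T)=P_\mathcal{X}\,f(U)\,|_\mathcal{X}$ and then to exploit two purely metric facts about the maximum norm on the non-Archimedean direct sum $\oplus_{k=1}^{N+1}\mathcal{X}$: the coordinate projection $P_\mathcal{X}$ is norm-nonexpansive, and the canonical inclusion $\iota:\mathcal{X}\to\oplus_{k=1}^{N+1}\mathcal{X}$, $x\mapsto(x,0,\dots,0)$, is an isometry. Once the compression identity is in place, the estimate follows in one line from submultiplicativity of the operator norm, exactly as in the classical Hilbert-space argument, with the orthogonal projection replaced by its ultrametric analogue.

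First I would establish the compression identity. Theorem \ref{ED} supplies $T^k=P_\mathcal{X}U^k|_\mathcal{X}$ for $k=1,\dots,N$; the case $k=0$ is immediate since $P_\mathcal{X}U^0|_\mathcal{X}=P_\mathcal{X}\iota=I_\mathcal{X}=T^0$. Because $P_\mathcal{X}$ and $\iota$ are $\mathbb{K}$-linear, I may pull the coefficients and the finite sum through the compression:
\begin{align*}
f(T)=\sum_{k=0}^N a_k T^k=\sum_{k=0}^N a_k\,P_\mathcal{X}U^k|_\mathcal{X}=P_\mathcal{X}\left(\sum_{k=0}^N a_k U^k\right)|_\mathcal{X}=P_\mathcal{X}\,f(U)\,|_\mathcal{X}.
\end{align*}
Note that this uses only the powers $U^0,\dots,U^N$, which is precisely the range in which the dilation relation (\ref{FINITEDILATIONEQUATION}) holds; no appeal is made to $U^{N+1}$ or higher, where the relation is known to fail.

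The norm estimate then proceeds pointwise. For each $x\in\mathcal{X}$ I would write $f(T)x=P_\mathcal{X}\,f(U)\,\iota(x)$ and bound
\begin{align*}
\|f(T)x\|=\|P_\mathcal{X}\,f(U)\,\iota(x)\|\leq\|f(U)\,\iota(x)\|\leq\|f(U)\|\,\|\iota(x)\|=\|f(U)\|\,\|x\|,
\end{align*}
using nonexpansiveness of $P_\mathcal{X}$, submultiplicativity of the operator norm, and the isometry of $\iota$ in turn; dividing by $\|x\|$ and taking the supremum yields $\|f(T)\|\leq\|f(U)\|$. The only point demanding genuine care — and where the non-Archimedean hypothesis really enters — is the verification that $P_\mathcal{X}$ is nonexpansive and $\iota$ isometric. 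Both follow once $\oplus_{k=1}^{N+1}\mathcal{X}$ is equipped with the maximum norm $\|(x_k)_{k=1}^{N+1}\|=\max_k\|x_k\|$, under which $\|x_1\|\leq\max_k\|x_k\|$ and $\|(x,0,\dots,0)\|=\|x\|$. I would therefore flag this norm convention explicitly, since the whole compression argument hinges on it: in the classical setting these two facts come from orthogonality, whereas here they are built directly into the ultrametric structure of the norm, so the argument transports to the p-adic world without modification.
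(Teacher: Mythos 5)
Your proposal is correct and follows essentially the same route as the paper: establish the compression identity $f(T)=P_\mathcal{X}f(U)|_\mathcal{X}$ from Equation (\ref{FINITEDILATIONEQUATION}) by linearity, then bound norms using that $P_\mathcal{X}$ is nonexpansive and restriction to $\mathcal{X}$ does not increase the operator norm. Your version is slightly more careful than the paper's (handling $k=0$ explicitly, arguing pointwise, and flagging the max-norm convention on $\oplus_{k=1}^{N+1}\mathcal{X}$ that makes $\iota$ isometric and $\|P_\mathcal{X}\|\leq 1$), but the underlying argument is identical.
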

\begin{proof}
We have 
\begin{align*}
f(T)&=a_0I_\mathcal{X}+a_1T+\cdots +a_NT^N=a_0P_\mathcal{X}+a_1P_\mathcal{X}U|_\mathcal{X}+\cdots +a_NP_\mathcal{X}U^N|_\mathcal{X}\\
&=P_\mathcal{X}(a_0I_\mathcal{X}+a_1U|_\mathcal{X}+\cdots +a_NU^N|_\mathcal{X})=P_\mathcal{X}f(U)|_\mathcal{X}	
\end{align*}
Therefore 
\begin{align*}
	\|f(T)\|=\|P_\mathcal{X}f(U)|_\mathcal{X}\|\leq \|P_\mathcal{X}\|\|f(U)|_\mathcal{X}\|=\|f(U)|_\mathcal{X}\|\leq \|f(U)\|.
\end{align*}	
\end{proof}
 In the following theorem, given a p-adic Hilbert space $\mathcal{X}$,  $\oplus_{n=-\infty}^{\infty} \mathcal{X}$ is the p-adic Hilbert  space defined by 
\begin{align*}
\oplus_{n=-\infty}^{\infty} \mathcal{X}\coloneqq \{ \{x_n\}_{n=-\infty}^\infty, x_n \in \mathcal{X}, \forall n \in \mathbb{Z}, \lim_{|n|\to \infty}\|x_n\|=0\}
\end{align*}
equipped with norm 
\begin{align*}
	\| \{x_n\}_{n=-\infty}^\infty\|\coloneqq \sup_{n\in \mathbb{Z}}\|x_n\|, \quad \forall \{x_n\}_{n=-\infty}^\infty \in \oplus_{n=-\infty}^{\infty} \mathcal{X}
\end{align*}
and inner product 
\begin{align*}
	\langle \{x_n\}_{n=-\infty}^\infty, \{y_n\}_{n=-\infty}^\infty\rangle \coloneqq \sum_{n=-\infty}^{\infty}\langle x_n, y_n \rangle , \quad \forall \{x_n\}_{n=-\infty}^\infty, \{y_n\}_{n=-\infty}^\infty\in \oplus_{n=-\infty}^{\infty} \mathcal{X}.
\end{align*}
Following is the most important p-adic Sz.-Nagy dilation.
\begin{theorem}\label{ND} (\textbf{p-adic Sz.-Nagy dilation})
Let $\mathcal{X}$ be a p-adic Hilbert  space  and 	$T: \mathcal{X} \to \mathcal{X}$ be a magic contraction.  Let $U\coloneqq(u_{n,m})_{-\infty \leq n,m\leq \infty}$ be the operator defined on 
$\oplus_{n=-\infty}^{\infty} \mathcal{X}$ given by  the infinite matrix defined as follows:
\begin{align*}
&u_{0,0}\coloneqq T, \quad u_{0,1}\coloneqq M_{T^*}, \quad u_{-1, 0}\coloneqq M_T, \quad u_{-1, 1}\coloneqq -T^*, \\
& u_{n,n+1}\coloneqq I_\mathcal{X}, \quad \forall n \in \mathbb{Z}, n\neq 0,1,  \quad u_{n,m}\coloneqq 0 \quad  \text{ otherwise},
\end{align*}
i.e., 
\begin{align*}
U=\begin{pmatrix}
 &\vdots &\vdots & \vdots & \vdots & \vdots &\vdots & \\
 \cdots & I_\mathcal{X}&0&0&0& 0&0&\cdots &\\
\cdots & 0 & I_\mathcal{X} & 0 & 0&  0&0& \cdots & \\
\cdots & 0 & 0& M_T & -T^*& 0&0&\cdots  & \\
\cdots & 0&0&\boxed{T}&M_{T^*}& 0&0&\cdots&\\
\cdots & 0&0&0&0& I_\mathcal{X}& 0&\cdots &\\
\cdots & 0&0&0&0& 0&I_\mathcal{X}&\cdots &\\
 & \vdots &\vdots &\vdots &\vdots  &\vdots & \vdots & \\
\end{pmatrix}_{\infty\times \infty}
\end{align*}
where $T$ is in the $(0,0)$  position (which is boxed), is unitary   on 	$\oplus_{n=-\infty}^{\infty} \mathcal{X}$ and 
\begin{align}\label{INFINITEDILATIONEQUATION}
T^n=P_\mathcal{X}U^n|_\mathcal{X},\quad \forall n\in \mathbb{N}, \quad (T^*)^n=P_\mathcal{X}(U^*)^n|_\mathcal{X},\quad \forall n\in \mathbb{N},
\end{align}
where $P_\mathcal{X}:\oplus_{n=-\infty}^{\infty} \mathcal{X}\ni  (x_n)_{n=-\infty}^{\infty} \mapsto x_0 \in \mathcal{X}$.
\end{theorem}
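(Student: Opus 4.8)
The plan is to establish unitarity by factoring $U$ through the Halmos block of Theorem \ref{HD}, and then to read off the dilation identities (\ref{INFINITEDILATIONEQUATION}) from a pair of complementary invariant subspaces together with a short induction. First I would record that $U$ is a well-defined bounded operator on $\oplus_{n=-\infty}^{\infty} \mathcal{X}$. For $x=\{x_n\}$ the formula reads $(Ux)_n=x_{n+1}$ whenever $n\neq -1,0$, while $(Ux)_{-1}=M_Tx_0-T^*x_1$ and $(Ux)_0=Tx_0+M_{T^*}x_1$. Since each row has at most two nonzero entries and $T,T^*,M_T,M_{T^*}$ are bounded, the ultrametric inequality gives $\|Ux\|\le \max(1,\|M_T\|,\|T^*\|,\|T\|,\|M_{T^*}\|)\,\|x\|$; moreover $\|(Ux)_n\|=\|x_{n+1}\|\to 0$ as $|n|\to\infty$, so $Ux$ again lies in the $c_0$-type sum. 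Thus $U$ is bounded and maps the space into itself.

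Next, for unitarity I would write $U=WS$, where $S$ is the bilateral shift $(Sx)_n=x_{n+1}$ and $W$ is the identity in every coordinate except the block on positions $\{-1,0\}$, where it equals $\left(\begin{smallmatrix} M_T & -T^* \\ T & M_{T^*}\end{smallmatrix}\right)$. The shift $S$ is unitary with $S^*=S^{-1}$ the backward shift. The block of $W$ is, up to interchanging its two rows, exactly the Halmos dilation of Theorem \ref{HD}; concretely $WW^*=W^*W=I$ follows from $M_T^2+T^*T=I_\mathcal{X}$, $M_{T^*}^2+TT^*=I_\mathcal{X}$, and the two off-diagonal cancellations $TM_T=M_{T^*}T$ together with its adjoint $M_TT^*=T^*M_{T^*}$. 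Hence $W$ is unitary, and so is the product $U=WS$. Equivalently, one may simply exhibit $U^*$ as the infinite matrix with $(U^*y)_m=y_{m-1}$ for $m\neq 0,1$, $(U^*y)_0=M_Ty_{-1}+T^*y_0$, and $(U^*y)_1=-Ty_{-1}+M_{T^*}y_0$, and check $U^*U=UU^*=I$ entrywise using the same relations.

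For the dilation identities I would introduce the left half-space $\mathcal{M}\coloneqq \{y:y_m=0\ \forall m\ge 1\}$. A one-line check shows $U\mathcal{M}\subseteq\mathcal{M}$ (the shift moves support leftward, and $W$ only mixes coordinates $-1,0$), and that on $\mathcal{M}$ the zeroth coordinate evolves by $(Uy)_0=Ty_0+M_{T^*}y_1=Ty_0$. Since the embedded vector $e_0x$ (with $x$ in position $0$ and $0$ elsewhere) lies in $\mathcal{M}$, an induction on $n$ gives $(U^ne_0x)_0=T^nx$, that is, $T^n=P_\mathcal{X}U^n|_\mathcal{X}$ for every $n\in\mathbb{N}$. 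Symmetrically, the right half-space $\mathcal{N}\coloneqq \{y:y_m=0\ \forall m\le -1\}$ is $U^*$-invariant with $(U^*y)_0=M_Ty_{-1}+T^*y_0=T^*y_0$ on $\mathcal{N}$, and the same induction yields $(T^*)^n=P_\mathcal{X}(U^*)^n|_\mathcal{X}$, completing (\ref{INFINITEDILATIONEQUATION}).

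The main obstacle I anticipate is not algebraic, since the magic relations of Definition \ref{PS} are tailored precisely so that the central block is unitary; it is rather the care needed in the bi-infinite, non-Archimedean setting, where one must confirm that $U$ genuinely preserves the condition $\lim_{|n|\to\infty}\|x_n\|=0$ and that the matrix $U^*$ really represents the adjoint against the given inner product. Both of these the ultrametric norm renders routine. The conceptual point that makes the dilation hold for all $n\in\mathbb{N}$, in contrast with the finite Egervary dilation of Theorem \ref{ED}, is that the defect mass created at each step is pushed off into the coordinates $m\le -1$ and never feeds back into the zeroth coordinate, so no truncation is required.
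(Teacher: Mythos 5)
Your proof is correct, but it is organized quite differently from the paper's. The paper proves unitarity the same way it does in Theorems \ref{HD} and \ref{ED}: it writes down the candidate inverse $V$ as a second explicit infinite matrix ($v_{0,0}=T^*$, $v_{0,-1}=M_T$, $v_{1,0}=M_{T^*}$, $v_{1,-1}=-T$, and $v_{n,n-1}=I_\mathcal{X}$ on the remaining subdiagonal) and asserts $UV=VU=I$ and $U^*=V$ by direct computation, while Equation (\ref{INFINITEDILATIONEQUATION}) is said to follow from a ``calculation of powers of $U$'' with no further detail; this coincides only with the alternative you mention in passing (exhibiting $U^*$ entrywise). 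Your primary route differs in two respects. First, the factorization $U=WS$, with $S$ the bilateral shift and $W$ block-diagonal with sole nontrivial block the Halmos unitary of Theorem \ref{HD} with its rows interchanged, reduces unitarity to the already-proved $2\times 2$ case plus the evident unitarity of the shift, instead of a fresh infinite-matrix verification. Second, replacing the unexplained ``powers of $U$'' by the invariances $U\mathcal{M}\subseteq \mathcal{M}$ and $U^*\mathcal{N}\subseteq \mathcal{N}$ of the two half-spaces together with induction supplies the argument the paper omits, and it isolates exactly why the identities hold for all $n\in\mathbb{N}$ here while the Egervary dilation of Theorem \ref{ED} stops at $N$: the defect is pushed into coordinates that never feed back into position $0$. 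Your write-up also makes explicit the $c_0$-type well-definedness and boundedness check that the paper skips entirely; what the paper's approach buys in exchange is brevity and self-containedness. As a bonus, your formula for $U^*$ agrees with the paper's displayed matrix for $V$ and its sequential form, and it settles a sign slip in the paper's entry list, where $v_{1,-1}$ is written as $T$ although it must be $-T$; likewise the paper's condition $u_{n,n+1}=I_\mathcal{X}$ for $n\neq 0,1$ should read $n\neq -1,0$, which is how you (correctly) read the matrix.
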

\begin{proof}
We  get Equation (\ref{INFINITEDILATIONEQUATION}) by calculation of powers of $U$. The matrix   $V\coloneqq(v_{n,m})_{-\infty \leq n,m\leq \infty}$ defined by  
\begin{align*}
	&v_{0,0}\coloneqq T^*, \quad v_{0,-1}\coloneqq M_{T}, \quad v_{1, 0}\coloneqq M_{T^*}, \quad v_{1, -1}\coloneqq T, \\
	& v_{n,n-1}\coloneqq I_\mathcal{X}, \quad \forall n \in \mathbb{Z}, n\neq 0,1,  \quad v_{n,m}\coloneqq 0 \quad  \text{ otherwise},
\end{align*}
i.e., 
\begin{align*}
V=\begin{pmatrix}
&\vdots &\vdots &\vdots & \vdots & \vdots & \vdots & \\
\cdots&I_\mathcal{X} & 0 & 0& 0 & 0&  0& \cdots & \\
\cdots &0& I_\mathcal{X} & 0& 0 & 0&  0& \cdots & \\
\cdots &0& 0 & M_T& \boxed{T^*} & 0& 0&\cdots  & \\
\cdots &0& 0&-T&M_{T^*}&0& 0&\cdots&\\
\cdots &0& 0&0&0&I_\mathcal{X}& 0&\cdots &\\
\cdots &0& 0&0&0&0& I_\mathcal{X}&\cdots &\\
&\vdots &\vdots &\vdots &\vdots &\vdots  & \vdots & \\
\end{pmatrix}_{\infty\times \infty}
\end{align*}
where $T^*$ is in the $(0.0)$  position (which is boxed), satisfies $UV=VU=I_{\oplus_{n=-\infty}^{\infty} \mathcal{X}}$ and  $U^*=V$. 
\end{proof}
We note that explicit sequential form of $U$ is
\begin{align*}
	U(x_n)_{n=-\infty}^{\infty}=(\dots, x_{-2}, x_{-1}, M_Tx_0-T^*x_1, \boxed{Tx_0+M_{T^*}x_1}, x_2, x_2, \dots)
\end{align*}
where $T^*$ is in the $0$  position (which is boxed)
and $U^*$ is 
\begin{align*}
	U^*(x_n)_{n=-\infty}^{\infty}=(\dots, x_{-3}, x_{-2},  \boxed{M_Tx_{-1}+T^*x_0}, -Tx_{-1}+M_{T^*}x_0, x_1, \dots).
\end{align*}
Using his dilation result, Sz.-Nagy gave a new proof of von Neumann mean ergodic theorem \cite{NAGY}. Motivated from this,  now derive p-adic von Neumann mean ergodic theorem.
\begin{theorem} (\textbf{p-adic von Neumann mean ergodic theorem})\label{PADICERGODIC}
Let $\mathcal{X}$ be a p-adic Hilbert  space  and 	$T: \mathcal{X} \to \mathcal{X}$ be a magic contraction. If p-adic Sz.-Nagy dilation  $U$ of $T$ is such that the limit 
\begin{align*}
\lim_{N\to \infty } \frac{1}{N+1}\sum_{n=1}^{N}U^nx \quad \text{ exists for all } x \in \mathcal{X},	
\end{align*}
then 	the limit 
\begin{align*}
	\lim_{N\to \infty } \frac{1}{N+1}\sum_{n=1}^{N}T^nx \quad \text{ exists for all } x \in \mathcal{X}.	
\end{align*}
\end{theorem}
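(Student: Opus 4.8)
The plan is to transfer the convergence of the Cesàro averages from the dilation $U$ back down to $T$ through the projection $P_\mathcal{X}$, exploiting that $P_\mathcal{X}$ is a continuous (norm-one) linear map which intertwines the powers of $T$ with those of $U$. In essence, the whole content of the theorem will be pushed onto Theorem \ref{ND} and the hypothesis, leaving only a continuity argument.

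First I would record the two structural facts to be used. From Theorem \ref{ND} we have the intertwining identity $T^n=P_\mathcal{X}U^n|_\mathcal{X}$ for every $n\in\mathbb{N}$, where $\mathcal{X}$ is regarded as the $0$-th summand of $\oplus_{n=-\infty}^{\infty}\mathcal{X}$ via the isometric embedding $x\mapsto(\dots,0,x,0,\dots)$. Second, the coordinate projection $P_\mathcal{X}:\oplus_{n=-\infty}^{\infty}\mathcal{X}\to\mathcal{X}$ is linear and is an (orthogonal) projection, so $\|P_\mathcal{X}\|\leq 1$ and in particular $P_\mathcal{X}$ is continuous.

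Next, I would fix $x\in\mathcal{X}$ and rewrite the partial Cesàro sums for $T$. Using linearity of $P_\mathcal{X}$ together with the dilation identity applied term by term,
\begin{align*}
\frac{1}{N+1}\sum_{n=1}^{N}T^nx=\frac{1}{N+1}\sum_{n=1}^{N}P_\mathcal{X}U^nx=P_\mathcal{X}\left(\frac{1}{N+1}\sum_{n=1}^{N}U^nx\right),
\end{align*}
where on the right $x$ is identified with its image in the $0$-th summand. Thus the $T$-averages are exactly the images under $P_\mathcal{X}$ of the $U$-averages.

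Finally, since $\lim_{N\to\infty}\frac{1}{N+1}\sum_{n=1}^{N}U^nx$ exists by hypothesis and $P_\mathcal{X}$ is continuous, the limit of the $T$-averages exists and equals $P_\mathcal{X}$ applied to the $U$-limit, completing the argument. The only point demanding care is the justification that $P_\mathcal{X}$ transports limits, i.e.\ its continuity, but in the non-Archimedean setting this is immediate from $\|P_\mathcal{X}\|\leq 1$; so there is no genuine obstacle. It is worth emphasizing that the convergence hypothesis upstairs is truly necessary rather than cosmetic, since p-adic Cesàro averages can fail to converge because $|1/(N+1)|$ need not remain bounded as $N\to\infty$; the theorem does not manufacture convergence but merely inherits it from the dilation.
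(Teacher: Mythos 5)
Your proof is correct and takes essentially the same route as the paper: both apply the dilation identity $T^n=P_\mathcal{X}U^n|_\mathcal{X}$ of Theorem \ref{ND} term by term, pull $P_\mathcal{X}$ out of the Ces\`{a}ro sum by linearity, and pass the limit through $P_\mathcal{X}$ by continuity. Your write-up merely makes explicit what the paper leaves implicit, namely that $\|P_\mathcal{X}\|\leq 1$ justifies exchanging $P_\mathcal{X}$ with the limit.
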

\begin{proof}
This follows from the observation 	
\begin{align*}
\lim_{N\to \infty } \frac{1}{N+1}\sum_{n=1}^{N}T^nx=\lim_{N\to \infty } \frac{1}{N+1}\sum_{n=1}^{N}P_\mathcal{X} U^nx=P_\mathcal{X}\left(\lim_{N\to \infty }\frac{1}{N+1}\sum_{n=1}^{N}U^nx\right) \quad \forall  x \in \mathcal{X}.		
\end{align*}
\end{proof}
We are now in the position to ask following problems based on dilation theory in Hilbert spaces.
\begin{problem}
\textbf{\begin{enumerate}[\upshape(i)]
		\item Whether there is p-adic Ando dilation? If yes, whether  one can dilate commuting three, four, ... commuting magic contractions to commuting unitaries?
		\item Whether there is p-adic von Neumann-Ando inequality?
		\item Whether there is (a kind of) uniqueness of p-adic Halmos dilation?
		\item Whether there is p-adic intertwining-lifting theorem (commutant lifting theorem)?
	\end{enumerate}}
\end{problem}
\begin{remark}
	Even though we derived all results in the p-adic setting, we can do all  the results except von Neumann inequality and von Neumann ergodic theorem, for modules (or even vector spaces) which admits bilinear (resp. conjugate) forms over  rings (resp. *-rings). Meanwhile, in that case, the title  of the paper can be written as MAGIC  CONTRACTIONS ON MODULES/VECTOR SPACES AND SZ.-NAGY DILATION. 
\end{remark}
	We give various  examples.
	\begin{example}
		  Let $\mathbb{Z}_3$ be the standard modulo $3$ field. Then the operator 
	\begin{align*}
	T\coloneqq 	 \begin{pmatrix}
		2 & 2   \\
		2 & 2  \\
	\end{pmatrix}: \mathbb{Z}_3^2 \ni (x,y)\mapsto T \begin{pmatrix}
	x\\
	y\\
\end{pmatrix} \in  \mathbb{Z}_3^2
	\end{align*} 
is a magic contraction. For, first notice
\begin{align*}
\begin{pmatrix}
	2 & 2   \\
	2 & 2  \\
\end{pmatrix}^2=\begin{pmatrix}
2 & 2   \\
2 & 2  \\
\end{pmatrix}.	
\end{align*}
Hence 
\begin{align*}
I-TT^*=I-T^*T=\begin{pmatrix}
	1 & 0   \\
	0 & 1  \\
\end{pmatrix}-\begin{pmatrix}
2 & 2   \\
2 & 2  \\
\end{pmatrix}=\begin{pmatrix}
2 & 1   \\
1 & 2  \\
\end{pmatrix}.		
\end{align*}
Just take 
\begin{align*}
	M_T=M_{T^*}\coloneqq \begin{pmatrix}
		2 & 1   \\
		1 & 2  \\
	\end{pmatrix}.		
\end{align*}
Then 
\begin{align*}
	M_T^2=\begin{pmatrix}
		2 & 1   \\
		1 & 2  \\
	\end{pmatrix}\begin{pmatrix}
	2 & 1   \\
	1 & 2  \\
\end{pmatrix}=\begin{pmatrix}
2 & 1   \\
1 & 2  \\
\end{pmatrix}=I-TT^*=I-T^*T
\end{align*}
and 
\begin{align*}
TM_T= \begin{pmatrix}
	2 & 2   \\
	2 & 2  \\
\end{pmatrix}\begin{pmatrix}
2 & 1   \\
1 & 2  \\
\end{pmatrix}=\begin{pmatrix}
0 &  0  \\
0 & 0  \\
\end{pmatrix}=\begin{pmatrix}
2 & 1   \\
1 & 2  \\
\end{pmatrix}\begin{pmatrix}
2 & 2   \\
2 & 2  \\
\end{pmatrix}=M_TT.						
\end{align*}
\end{example} 
\begin{example}
The operator 
\begin{align*}
	T\coloneqq 	 \begin{pmatrix}
		1 & 1   \\
		1 & 1  \\
	\end{pmatrix}: \mathbb{Z}_2^2 \ni (x,y)\mapsto T  \begin{pmatrix}
	x\\
	y\\
\end{pmatrix} \in  \mathbb{Z}_2^2
\end{align*} 
is a magic contraction. Take 
\begin{align*}
	M_T=M_{T^*}\coloneqq \begin{pmatrix}
		0 & 1   \\
		1 & 0  \\
	\end{pmatrix}.		
\end{align*}
Then 
\begin{align*}
	I-TT^*=I-T^*T=\begin{pmatrix}
		1 & 0   \\
		0 & 1  \\
	\end{pmatrix}-\begin{pmatrix}
		0 & 0   \\
		0 & 0  \\
	\end{pmatrix}=\begin{pmatrix}
		1 & 0  \\
		0 & 1  \\
	\end{pmatrix}=\begin{pmatrix}
	0 & 1   \\
	1 & 0  \\
\end{pmatrix}\begin{pmatrix}
0 & 1   \\
1 & 0  \\
\end{pmatrix}=M_T^2=M_{T^*}^2
\end{align*}
and 
\begin{align*}
	TM_T=\begin{pmatrix}
		1 & 1   \\
		1 & 1  \\
	\end{pmatrix}\begin{pmatrix}
	0 & 1   \\
	1 & 0  \\
\end{pmatrix}=\begin{pmatrix}
1 & 1   \\
1 & 1  \\
\end{pmatrix}=\begin{pmatrix}
0 & 1   \\
1 & 0  \\
\end{pmatrix}\begin{pmatrix}
1 & 1   \\
1 & 1  \\
\end{pmatrix}=M_TT.
\end{align*}
Note that we can directly verify that 
\begin{align*}
\begin{pmatrix}
	1 & 1   &0&1 \\
	1 & 1   &1&0\\
	0 & 1   &1&1 \\
	1 & 0   &1&1 \\
\end{pmatrix}	
\end{align*}
is a Halmos dilation of $T$. We can also take 
\begin{align*}
	M_T=M_{T^*}\coloneqq \begin{pmatrix}
		1 & 0   \\
		0 & 1  \\
	\end{pmatrix}.		
\end{align*}
In this case, we get the matrix
\begin{align*}
	\begin{pmatrix}
		1 & 1   &1&0 \\
		1 & 1   &0&1\\
		1&0   &1&1 \\
		0 & 1   &1&1 \\
	\end{pmatrix}	
\end{align*}
which is also a Halmos dilation of $T$.
\end{example}
\begin{example}
Let $a,b\in \mathbb{N} \cup \{0\}$  and $p\geq 2$   be such that $a^2+b^2\equiv p-1 (\operatorname{mod} p)$ and $2ab\equiv p-2 (\operatorname{mod} p)$. Then the operator 
\begin{align*}
	T\coloneqq 	 \begin{pmatrix}
		p-1 & p-1   \\
		p-1 & p-1  \\
	\end{pmatrix}: \mathbb{Z}_p^2 \ni (x,y)\mapsto T \begin{pmatrix}
		x\\
		y\\
	\end{pmatrix} \in  \mathbb{Z}_p^2
\end{align*} 
is a magic contraction. Notice
\begin{align*}
	\begin{pmatrix}
		p-1 & p-1   \\
		p-1 & p-1  \\
	\end{pmatrix}^2=\begin{pmatrix}
		2 & 2   \\
		2 & 2  \\
	\end{pmatrix}.	
\end{align*}
Hence 
\begin{align*}
	I-TT^*=I-T^*T=\begin{pmatrix}
		1 & 0   \\
		0 & 1  \\
	\end{pmatrix}-\begin{pmatrix}
		2 & 2   \\
		2 & 2  \\
	\end{pmatrix}=\begin{pmatrix}
		p-1 & p-2   \\
		p-2 & p-1  \\
	\end{pmatrix}.		
\end{align*}
Define
\begin{align*}
	M_T=M_{T^*}\coloneqq \begin{pmatrix}
		a & b   \\
		b & a  \\
	\end{pmatrix}.		
\end{align*}
Then 
\begin{align*}
	M_T^2=\begin{pmatrix}
		a & b   \\
		b & a  \\
	\end{pmatrix}\begin{pmatrix}
		a & b   \\
		b & a  \\
	\end{pmatrix}=\begin{pmatrix}
		a^2+b^2 & 2ab   \\
		2ab & a^2+b^2  \\
	\end{pmatrix}=\begin{pmatrix}
	p-1 & p-2   \\
	p-2 & p-1  \\
\end{pmatrix}=I-TT^*=I-T^*T
\end{align*}
and 
\begin{align*}
	TM_T&= \begin{pmatrix}
		p-1 & p-1   \\
		p-1 & p-1  \\
	\end{pmatrix}\begin{pmatrix}
		a &   b \\
		b & a  \\
	\end{pmatrix}=\begin{pmatrix}
		(p-1)(a+b) &  (p-1)(a+b)  \\
		(p-1)(a+b) & (p-1)(a+b)  \\
	\end{pmatrix}\\
&=\begin{pmatrix}
		a & b   \\
		b & a  \\
	\end{pmatrix}\begin{pmatrix}
		p-1 & p-1   \\
		p-1 & p-1  \\
	\end{pmatrix}=M_TT.						
\end{align*}
\end{example}
\begin{example}
	The operator $T\coloneqq 2\in \mathbb{Z}_5$ is not a contraction. This is because
	\begin{align*}
		1-TT^*=1-4=-3=2\neq 0^2, 1^2, 2^2, 3^2, 4^2.
	\end{align*}
\end{example}
\begin{example}
	Let $p$ be an odd prime and consider $\mathbb{Z}_p$.  Now $\operatorname{gcd} (2, p-1)=2$. Let  $a \in \mathbb{Z}_p$ such that   $\operatorname{gcd} (1-a^2,p)=1$  and  
	\begin{align*}
		(1-a^2)^\frac{p-1}{2}\not\equiv 1 (\operatorname{mod} p).
	\end{align*}
Quadratic reciprocity then says that $a$ is not a contraction.
\end{example}
\begin{example}
	Consider $\mathbb{C}$ with involution as identity. Let $a, b \in \mathbb{C}$ be such that $a^2+b^2=1$. Then $a$ is a magic contraction. Halmos dilation of $a$ is 
	\begin{align*}
	\begin{pmatrix}
		a & b   \\
		b & -a  \\
	\end{pmatrix}.	
	\end{align*}
Hence every complex number is a magic contraction w.r.t. identity involution! We can do this on any commutative ring whenever ring has elements $a, b$ such that $a^2+b^2=1$.
\end{example}
It is a good problem (which seems to be not easy and may require \textbf{Number Theory} tools such as \textbf{Quadratic Reciprocity}) to characterize all magic contractions in the set of all $n$ by $n$ matrices over $\mathbb{Z}_p$ where $p\in \mathbb{N}$.  Officially, we can formulate the following problem.
\begin{problem}
	\textbf{Let $\mathcal{R}$ be a $*$-ring (may be finite or infinite) and for $m, n\in \mathbb{N}$, let $\mathbb{M}_{m\times n}(\mathcal{R})$ be the set of all $m$ by $n$ matrices over $\mathcal{R}$. Let $I_n$ be the $n$ by $n$ identity  matrix over $\mathcal{R}$. Classify matrices $T \in \mathbb{M}_{m\times n}(\mathcal{R})$ which are magic contractions, i.e., for which matrices $T \in \mathbb{M}_{m\times n}(\mathcal{R})$, there are   self adjoint matrices (may not be unique)  $M_T\in \mathbb{M}_{m\times n}(\mathcal{R})$, $M_{T^*} \in \mathbb{M}_{n\times m}(\mathcal{R})$ satisfying following:
	\begin{align*}
		&M_T^2=I_n-T^*T, \quad M_{T^*}^2=I_m-TT^*,\\
		&TM_T=M_{T^*}T.
	\end{align*}
If $\mathcal{R}$ is finite, what is the number of magic contractions in $\mathbb{M}_{m\times n}(\mathcal{R})$  or whether there is atleast a good upper bound on the number of magic contractions in $\mathbb{M}_{m\times n}(\mathcal{R})$?}
\end{problem} 
  \bibliographystyle{plain}
 \bibliography{reference.bib}

\end{document}